\documentclass[10pt]{amsart}

\usepackage{amsmath,amssymb,amsthm,mathtools}
\usepackage[margin=1.1in]{geometry}
\usepackage[colorlinks=true,linkcolor=blue,citecolor=blue,urlcolor=blue]{hyperref}
\usepackage{array}
\usepackage{booktabs}

\newtheorem{theorem}{Theorem}[section]
\newtheorem{lemma}[theorem]{Lemma}

\theoremstyle{remark}

\newcommand{\ph}{\varphi}
\newcommand{\dd}{\delta}

\title{A Counterexample to the Chung--Graham--Spiro Gap-Set Conjecture}

\author{Mohsen Aliabadi}
\address{Department of Mathematics, Clayton State University, Morrow, GA, USA}
\email{mohsen.aliabadi@clayton.edu}

\subjclass[2020]{11B39, 11B37, 11B83, 05A15}

\keywords{Fibonacci numbers, slow Fibonacci walks, down-integers, up-integers, gap sets, fractional parts}

\begin{document}

\begin{abstract}
Chung, Graham, and Spiro introduced slow Fibonacci walks and used them to
partition the integers $n\ge2$ into two sequences, the down-integers and the
up-integers. They studied the local spacing of these two sequences and
conjectured that their $\ell$-step gap sets agree for every $\ell\ge1$. We show
that the conjecture fails at $\ell=4$ by proving
\[
        9\in U_4\setminus D_4 .
\]
\end{abstract}

\maketitle

\section{Introduction}

Let
\[
        f_1=f_2=1,\qquad f_{k+2}=f_{k+1}+f_k\quad(k\ge1)
\]
be the Fibonacci sequence, and let
\[
        \ph=\frac{1+\sqrt5}{2}.
\]
Given positive integers $a_1,a_2$, the corresponding \emph{Fibonacci walk} is the
sequence
\[
        w_1=a_1,\qquad w_2=a_2,\qquad w_{k+2}=w_{k+1}+w_k .
\]
If $w_s=n$ for some $s$, then this walk reaches $n$. Among all Fibonacci walks
that reach a fixed integer $n$, Chung, Graham, and Spiro \cite{CGS} studied
those that reach $n$ as late as possible; these are the slow Fibonacci walks.

We shall use the following representation theorem from \cite{CGS}. For every
integer $n\ge2$, there exist unique integers $a,b,t$ such that
\[
        n=a f_t+b f_{t-1},
        \qquad
        t\ge2,
        \qquad
        1\le a\le b\le f_t.
\]
We call this the \emph{Chung--Graham--Spiro representation} of $n$, and we call
$t$ the \emph{representation parameter} of $n$. A representation satisfying the
displayed conditions will be called \emph{valid}.

The slow walk reaching $n$ has a next term. In the terminology of \cite{CGS}, an
integer $n\ge2$ is called a \emph{down-integer} if this next term is
$\lfloor\ph n\rfloor$, and it is called an \emph{up-integer} if this next term is
$\lceil\ph n\rceil$. The Chung--Graham--Spiro characterization says that $n$ is
a down-integer precisely when its representation parameter $t$ is even, and that
$n$ is an up-integer precisely when $t$ is odd. Thus the down-integers and the
up-integers form a disjoint partition of the integers $n\ge2$.

Let
\[
        D=\{d_1<d_2<d_3<\cdots\}
\]
be the increasing sequence of down-integers, and let
\[
        U=\{u_1<u_2<u_3<\cdots\}
\]
be the increasing sequence of up-integers. The first few values are
\[
\begin{aligned}
        D&=\{2,5,7,9,10,12,13,15,18,\ldots\},\\
        U&=\{3,4,6,8,11,14,16,17,\ldots\}.
\end{aligned}
\]
These lists suggest that both sequences have very small local gaps between consecutive terms . Indeed, it
was proved in \cite{CGS} that the possible gaps between consecutive terms are
the same for the two sequences, namely
\[
        D_1=U_1=\{1,2,3,5\}.
\]
Thus every two consecutive down-integers, and every two consecutive
up-integers, differ by at most $5$.

It is then natural to ask whether the same agreement persists for longer local
spacings. For $\ell\ge1$, define the \emph{$\ell$-step gap sets}
\[
        D_\ell=\{d_{k+\ell}-d_k:k\ge1\},
        \qquad
        U_\ell=\{u_{k+\ell}-u_k:k\ge1\}.
\]
Thus $D_\ell$ records the possible spans of blocks of $\ell+1$ consecutive
down-integers, and $U_\ell$ has the analogous meaning for up-integers. Chung,
Graham, and Spiro also proved that
\[
        D_2=U_2=\{2,3,4,5,6,8,10\},
\]
and conjectured that
\[
        D_\ell=U_\ell\qquad\text{for every }\ell\ge1.
\]
The purpose of this note is to give a counterexample.

\begin{theorem}\label{thm:main}
One has
\[
        9\in U_4\setminus D_4.
\]
In particular,
\[
        D_4\ne U_4.
\]
\end{theorem}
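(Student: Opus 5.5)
The proof has two parts of very different difficulty: exhibiting $9\in U_4$, which is a finite verification, and excluding $9\in D_4$, which needs an argument valid for all $n$.

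\emph{Membership $9\in U_4$.} From the listed initial segment of $U$, the five consecutive up-integers $8,11,14,16,17$ give $u_8-u_4=17-8=9$. To make this rigorous I will check the Chung--Graham--Spiro representation of every integer from $8$ to $17$. For example, $8=1\cdot f_5+1\cdot f_4$ has $t=5$, so $8$ is an up-integer. Likewise $9=1\cdot f_4+3\cdot f_3$ has $t=4$, so $9$ is a down-integer. Doing this for each integer confirms that $9,10,12,13,15$ are down-integers and $8,11,14,16,17$ are up-integers. Hence $8,11,14,16,17$ really are consecutive terms of $U$.

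\emph{Exclusion $9\notin D_4$.} This is the main obstacle. I first want a criterion for the type of $n$ that can be tested locally. Let $\psi=-1/\ph$, and use $f_{k+1}-\ph f_k=\psi^k$. For the representation $n=af_t+bf_{t-1}$, the next term of the slow walk is $af_{t+1}+bf_t$, and
\[
af_{t+1}+bf_t-\ph n=\psi^{t-1}(b+a\psi).
\]
Since $b\ge a>a/\ph$, the factor $b+a\psi$ is positive. So the parity of $t$ fixes which side of $\ph n$ the next term lies on, and the size of the gap is $\ph^{-(t-1)}(b-a/\ph)$.

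I would then study how $(a,b,t)$ changes when $n$ increases by $1$. Within a fixed $t$, adding $1$ changes $(a,b)$ by a short vector $(x,y)$ with $xf_t+yf_{t-1}=1$, namely $(x,y)=\pm(f_{t-2},-f_{t-1})$ up to reduction. Each such step either keeps $(a,b)$ inside the validity region $1\le a\le b\le f_t$, or forces a switch to parameter $t\pm1$. In other words, the down/up word is read off a walk through a union of triangles in the $(a,b)$-plane, and the type changes exactly when the walk crosses a boundary $a=1$, $a=b$, or $b=f_t$. For this reason I would rather not work with fractional parts $\{n\ph\}$ directly.

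Now suppose, for contradiction, that $m<m+9$ are both down-integers and exactly three of $m+1,\dots,m+8$ are down. That is, a length-$10$ window contains exactly $5$ down-integers, with down-integers at both ends. The results $D_1=\{1,2,3,5\}$ and $D_2=\{2,3,4,5,6,8,10\}$ restrict the admissible gap sequences $(g_1,g_2,g_3,g_4)$ to compositions of $9$ into parts from $\{1,2,3,5\}$, with consecutive pairs summing into $D_2$. The same constraints apply to the up-gaps inside the window. Only a handful of patterns survive, for example $(1,3,2,3)$ and $(2,2,2,3)$ and their rearrangements.

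For each surviving pattern I would use the boundary-crossing description above. Near a change of $t$, the window forces $(a,b)$ to lie in a narrow region: $a$ or $b-a$ is small, or $b$ is close to $f_t$. I then check that the required alternation of parities cannot occur in $10$ steps. I expect this case analysis to be the delicate step, and I would handle small $t$ (say $t\le 7$) by direct enumeration of $n$ up to about $f_8^2$. If the geometric case analysis proves too messy, my fallback is to show that the down/up word is a Sturmian-type (balanced) coding determined by a rotation. Then $9\in D_4$ becomes an explicit interval condition on the rotation parameter, which can be shown empty while the corresponding condition for $U_4$ is nonempty.
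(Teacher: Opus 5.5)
Your verification of $9\in U_4$ is fine and matches the paper. The gap is in $9\notin D_4$.

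\textbf{The main route is not carried out.} The sentence ``I then check that the required alternation of parities cannot occur in $10$ steps'' is the whole content of the argument, and it is never done. The picture it rests on is also inaccurate. Leaving the triangle need not move the parameter to $t\pm1$, and crossing a boundary need not change the type. For example, $12=2f_4+3f_3$ has $t=4$ and $13=f_6+f_5$ has $t=6$, and both are down-integers. Also, there are $22$ admissible gap patterns, including the arrangements of $\{1,1,2,5\}$, not a handful.

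\textbf{The fallback rests on a false premise.} The down/up word is not balanced and is not a rotation coding. Indeed $D_1=\{1,2,3,5\}$ has four values. Gaps of one letter in a Sturmian word take only two consecutive values, and return times of an irrational rotation to an interval take at most three. The rotation $\{n\varphi\}$ gives only necessary conditions: $m\in U$ implies $\{m\varphi\}\ge\eta$, and $m\in D$ implies $\{m\varphi\}\le\rho$. These conditions eliminate only $12$ of the $22$ patterns. For instance, every constraint for $(2,3,2,2)$ holds for all $x=\{n\varphi\}\in[0,13-8\varphi)$. So no interval condition on the rotation parameter alone can be shown empty.

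\textbf{The missing idea} is an arithmetic step that uses the representation itself, not just $\{n\varphi\}$. You already have the ingredients:
\begin{itemize}
\item your identity gives $\{N\varphi\}=\varphi^{-t}(\varphi b-a)$ for even $t$;
\item you also have the idea of translating $(a,b)$ inside a fixed triangle.
\end{itemize}
The paper connects these through two shift lemmas:
\begin{itemize}
\item \emph{Forward $8$-shift.} If $t\ge8$ is even and $\{N\varphi\}\ge 15-9\varphi$, the lower bound forces $b-a\ge f_{t-5}$. The identity $8=f_{t-7}f_t-f_{t-6}f_{t-1}$ then writes $N+8=(a+f_{t-7})f_t+(b-f_{t-6})f_{t-1}$ inside the same triangle, so $N+8\in D$.
\item \emph{Backward $5$-shift.} If $\{N\varphi\}\ge 9\varphi-14$, then $b-a\ge f_{t-4}$, and $5=f_{t-5}f_{t-1}-f_{t-6}f_t$ gives $N-5\in D$.
\end{itemize}

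The ten patterns that survive the rotation constraints split as follows:
\begin{itemize}
\item six have $n+8\in U$ with $\{n\varphi\}\ge 15-9\varphi$;
\item four have $n+4\in U$ with $\{(n+9)\varphi\}\ge 9\varphi-14$.
\end{itemize}
Each case contradicts the corresponding shift lemma, once $t<8$ is handled by a finite enumeration. So fractional parts are not something to avoid. They are the single parameter that makes the pattern analysis uniform in $t$, and the shift lemmas finish what they leave open.
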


We first note that $9\in U_4$. Using the representation criterion, the integers
$2\le m\le17$ split as
\[
\begin{aligned}
        D\cap[2,17]&=\{2,5,7,9,10,12,13,15\},\\
        U\cap[2,17]&=\{3,4,6,8,11,14,16,17\}.
\end{aligned}
\]
Hence
\[
        8,11,14,16,17
\]
are five consecutive up-integers, and
\[
        17-8=9.
\]
Therefore $9\in U_4$. It remains to prove that $9\notin D_4$.

For a real number $x$, let
\[
        \{x\}=x-\lfloor x\rfloor
\]
denote its fractional part. For an integer $n\ge1$, set
\[
        \dd_n=\{\ph n\}.
\]
Also define
\[
        \eta=\frac{1}{\sqrt5\,\ph}=\frac{1}{\ph+2},
        \qquad
        \rho=1-\eta.
\]

We shall use the following consequence of Proposition 4.2 of \cite{CGS}.

\begin{lemma}\label{lem:threshold}
For every integer $m\ge2$,
\[
        \dd_m<\eta \implies m\in D.
\]
Consequently,
\[
        m\in U \implies \dd_m\ge \eta,
        \qquad
        m\in D \implies \dd_m\le \rho.
\]
\end{lemma}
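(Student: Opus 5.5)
I would prove Lemma~\ref{lem:threshold} directly from the Chung--Graham--Spiro representation. The idea is to compute $\dd_m$ explicitly in terms of $a,b,t$ and read off its position in $[0,1)$ from the parity of $t$. I would not try to extract it from Proposition~4.2 of \cite{CGS}.

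\textbf{Step 1: an exact formula for $\ph m$.} Let $\psi=-1/\ph$. By Binet's formula,
\[
f_{k+1}-\ph f_k=\psi^k \qquad (k\ge1).
\]
Write $m=af_t+bf_{t-1}$ with $t\ge2$ and $1\le a\le b\le f_t$. Then
\[
\ph m=af_{t+1}+bf_t-e,\qquad e=a\psi^t+b\psi^{t-1}=\psi^{t-1}\Bigl(b-\frac{a}{\ph}\Bigr).
\]
Since $a\le b$, the factor $b-a/\ph$ is positive. Hence $e$ has the sign of $(-1)^{t-1}$, and
\[
|e|=\ph^{-(t-1)}\bigl(b-a/\ph\bigr).
\]

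\textbf{Step 2: the key bound $|e|<\rho$.} Use $a\ge1$, $b\le f_t$, and $f_t=(\ph^t-\psi^t)/\sqrt5$. This gives
\[
|e|\le \ph^{-(t-1)}f_t-\ph^{-t}=\frac{\ph}{\sqrt5}-\frac{(-1)^t\ph^{-(2t-1)}}{\sqrt5}-\ph^{-t}.
\]
Next, $\ph+1/\ph=\sqrt5$, so $\ph/\sqrt5=1-1/(\sqrt5\ph)=1-\eta=\rho$. The correction term is at most $\ph^{-(2t-1)}/\sqrt5-\ph^{-t}$, which is negative because $\ph^{t-1}>1/\sqrt5$. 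Therefore
\[
0<|e|<\rho<1.
\]
This inequality is the one step that needs care. The main point is the identity $\ph/\sqrt5=\rho$, which ties the threshold constant $\eta$ to the extreme case $a=1$, $b=f_t$.

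\textbf{Step 3: reading off $\dd_m$ by parity.} By the Chung--Graham--Spiro characterization, $m\in D$ iff $t$ is even, and $m\in U$ iff $t$ is odd.
\begin{itemize}
\item If $t$ is odd, then $e>0$ and $0<e<1$, so $\ph m$ lies strictly between two consecutive integers and $\dd_m=1-e$. Step 2 then gives $\dd_m>1-\rho=\eta$.
\item If $t$ is even, then $e<0$ and $\dd_m=|e|<\rho$.
\end{itemize}
Thus $m\in U$ implies $\dd_m>\eta$, and $m\in D$ implies $\dd_m<\rho$.

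The contrapositive of the first implication, together with the fact that $D$ and $U$ partition the integers $\ge2$, gives $\dd_m<\eta\implies m\in D$. The two displayed consequences hold in the stated non-strict forms, and in fact strictly.
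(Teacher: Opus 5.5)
Your proof is correct, but it takes a different route from the paper. The paper does not derive the lemma. It cites Proposition~4.2 of \cite{CGS} for both the lower-end implication $\dd_m<\eta\Rightarrow m\in D$ and an upper-end implication $1-\dd_m<\eta\Rightarrow m\in U$, and then takes contrapositives using the fact that $D$ and $U$ partition the integers $\ge2$.

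You instead derive everything from Binet's formula and two facts the paper already imports in its introduction: the representation theorem and the parity characterization of $D$ and $U$. The identity $\ph m=af_{t+1}+bf_t-\psi^{t-1}(b-a/\ph)$ gives $\dd_m$ exactly for both parities of $t$. The extremal case $a=1$, $b=f_t$, together with $\ph/\sqrt5=\rho$, gives $|e|<\rho$. Your checks go through: the correction term is negative because $\ph^{t-1}>1/\sqrt5$, and $0<|e|<1$ justifies the floor computation. You also reverse the logical order, proving the $U$-statement first and getting $\dd_m<\eta\Rightarrow m\in D$ as its contrapositive. That is fine because of the partition.

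Your route gains several things:
\begin{itemize}
\item It is self-contained apart from the parity characterization.
\item It gives the strict forms $m\in U\Rightarrow\dd_m>\eta$ and $m\in D\Rightarrow\dd_m<\rho$.
\item It explains why $\eta=1/(\sqrt5\ph)$ is the natural threshold: with $a=1$, $b=f_t$ and $t\to\infty$, $|e|\to\rho$.
\item For even $t$, your Step~1 is exactly the formula $\dd_N=\ph^{-t}(\ph b-a)$ that the paper takes from Lemma~4.1 of \cite{CGS} in the two shift lemmas, so your argument would remove that citation as well.
\end{itemize}
The paper's route is shorter. Its cost is that it rests entirely on the precise content of an external proposition, including the paper's assertion that the same proposition also contains the upper-end implication.
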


\begin{proof}
The first implication is Proposition 4.2 of \cite{CGS}. Since each integer
$m\ge2$ lies in exactly one of $D$ and $U$, its contrapositive gives
\[
        m\in U\implies \dd_m\ge\eta.
\]
The same proposition also gives the upper-end implication
\[
        1-\dd_m<\eta \implies m\in U.
\]
Therefore, if $m\in D$, then $1-\dd_m\ge\eta$, and hence
\[
        \dd_m\le1-\eta=\rho.
\]
\end{proof}

For $0\le j\le9$, put
\[
        \alpha_j=\{j\ph\}.
\]
Then, for every integer $n\ge1$,
\[
        \dd_{n+j}=\{\dd_n+\alpha_j\}.
\]
Explicitly,
\[
\begin{aligned}
\alpha_0&=0,\\
\alpha_1&=\ph-1,\\
\alpha_2&=2\ph-3,\\
\alpha_3&=3\ph-4,\\
\alpha_4&=4\ph-6,\\
\alpha_5&=5\ph-8,\\
\alpha_6&=6\ph-9,\\
\alpha_7&=7\ph-11,\\
\alpha_8&=8\ph-12,\\
\alpha_9&=9\ph-14.
\end{aligned}
\]

We shall also use two elementary Fibonacci identities. For even $t$,
\[
        8=f_{t-7}f_t-f_{t-6}f_{t-1},
\]
and
\[
        5=f_{t-5}f_{t-1}-f_{t-6}f_t.
\]
These follow from d'Ocagne's identity
\[
        f_r f_{s+1}-f_{r+1}f_s=(-1)^s f_{r-s},
\]
with the standard extension
\[
        f_{-m}=(-1)^{m+1}f_m.
\]
For background on Fibonacci identities, see \cite{Koshy}.

\section{Two shift lemmas}

The following two lemmas are the main structural ingredients in the proof.

\begin{lemma}[Forward $8$-shift]\label{lem:forward8}
Let $N\in D$, and write its Chung--Graham--Spiro representation as
\[
        N=a f_t+b f_{t-1},
        \qquad
        1\le a\le b\le f_t.
\]
Assume that $t$ is even, $t\ge8$, and
\[
        \dd_N\ge 15-9\ph.
\]
Then
\[
        N+8\in D.
\]
\end{lemma}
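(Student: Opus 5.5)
The plan is to produce the Chung--Graham--Spiro representation of $N+8$ directly by adding the identity $8=f_{t-7}f_t-f_{t-6}f_{t-1}$, which is available because $t$ is even and $t\ge 8$, so that all indices are at least $1$. This gives
\[
        N+8=(a+f_{t-7})f_t+(b-f_{t-6})f_{t-1}.
\]
If this representation is valid, then uniqueness makes its parameter equal to $t$. Since $t$ is even, the characterization then gives $N+8\in D$. Put $a'=a+f_{t-7}$ and $b'=b-f_{t-6}$. The bounds $a'\ge1$ and $b'\le b\le f_t$ are immediate. The only real condition is $a'\le b'$, which is equivalent to
\[
        b-a\ge f_{t-7}+f_{t-6}=f_{t-5}.
\]
So the whole proof reduces to deriving this gap inequality from the hypothesis $\dd_N\ge 15-9\ph$.

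To bring in $\dd_N$, I will compute it exactly in terms of $a,b,t$. Set $\psi=-1/\ph$. From Binet's formula, $f_{k+1}-\ph f_k=\psi^k$. Hence
\[
        \ph N=af_{t+1}+bf_t-(a\psi^t+b\psi^{t-1}).
\]
Because $t$ is even, $\psi^t=\ph^{-t}$ and $\psi^{t-1}=-\ph^{-(t-1)}$. This gives
\[
        \ph N\equiv \ph^{-t}(b\ph-a)\pmod 1.
\]
Next I check that $0<\ph^{-t}(b\ph-a)<1$. Positivity follows from $b\ge a$. For the upper bound, $b\ph-a<\ph f_t$ and $f_t\le(\ph^t+1)/\sqrt5$, so the quantity is below roughly $\ph/\sqrt5<1$. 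Therefore
\[
        \dd_N=\ph^{-t}(b\ph-a)=\ph^{-t}\bigl((\ph-1)b+(b-a)\bigr).
\]

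Now I argue by contradiction. Suppose $b-a\le f_{t-5}-1$. Using $b\le f_t$ and the Binet bounds $f_t\le(\ph^t+1)/\sqrt5$ and $f_{t-5}\le(\ph^{t-5}+1)/\sqrt5$, I get
\[
        \dd_N<\frac{\ph^{-1}+\ph^{-5}}{\sqrt5}+\frac{\ph^{-t}\,\ph}{\sqrt5}.
\]
Numerically the main term is about $0.3167$. For $t\ge8$ the error term is at most about $\ph^{-7}/\sqrt5\approx0.016$. The threshold is $15-9\ph\approx0.4377$. Hence $\dd_N<15-9\ph$, contradicting the hypothesis. This forces $b-a\ge f_{t-5}$, which makes the representation of $N+8$ valid and finishes the proof.

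I expect the main obstacle to be the bookkeeping in the exact formula for $\dd_N$: getting the parity signs right, and confirming that the quantity lies in $[0,1)$ so that no floor adjustment is needed. Turning the numerical estimate into a rigorous inequality is the other delicate point. I would do this either by clean Binet bounds using $t\ge8$ (the error term decreases in $t$, so the case $t=8$ suffices), or by keeping everything in $\mathbb{Z}[\ph]$ and comparing exactly.
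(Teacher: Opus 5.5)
Your proposal is correct and follows essentially the same route as the paper: you force $b-a\ge f_{t-5}$ by contradiction from the exact formula $\dd_N=\ph^{-t}(\ph b-a)$, and then shift by the identity $8=f_{t-7}f_t-f_{t-6}f_{t-1}$ to obtain a valid representation of $N+8$ with the same even parameter $t$. The differences are minor: you derive the formula for $\dd_N$ directly from Binet's formula instead of citing Lemma 4.1 of Chung--Graham--Spiro, and your bound $f_s\le(\ph^s+1)/\sqrt5$ works uniformly for all $t\ge8$, which avoids the paper's separate $t=8$ case (needed there because $f_3=2>\frac{23}{50}\ph^3$).
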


\begin{proof}
Since $N\in D$ and $t$ is even, Lemma 4.1 of \cite{CGS} gives
\[
        \dd_N=\ph^{-t}(\ph b-a).
\]
Thus
\[
        \ph b-a\ge (15-9\ph)\ph^t.
\]
We first prove that
\[
        b-a\ge f_{t-5}.
\]
Suppose not. Since $b-a$ is an integer,
\[
        b-a\le f_{t-5}-1.
\]
Using $b\le f_t$, we get
\[
        \ph b-a=\ph^{-1}b+(b-a)
        \le \ph^{-1}f_t+f_{t-5}-1.
\]

If $t=8$, then
\[
        \ph^{-1}f_8+f_3-1=21\ph^{-1}+1=21\ph-20,
\]
whereas
\[
        (15-9\ph)\ph^8=9\ph+6.
\]
The inequality
\[
        21\ph-20<9\ph+6
\]
is equivalent to $12\ph<26$, which is true. Hence the desired contradiction
holds when $t=8$.

Now assume $t\ge10$. We use the elementary estimate
\[
        f_s\le \frac{23}{50}\ph^s\qquad(s\ge4),
\]
which follows directly from Binet's formula \cite{Koshy}. Since $t\ge10$, both
$t\ge4$ and $t-5\ge5$, so
\[
        f_t\le \frac{23}{50}\ph^t,
        \qquad
        f_{t-5}\le \frac{23}{50}\ph^{t-5}.
\]
Therefore
\[
        \ph^{-1}f_t+f_{t-5}-1
        <
        \frac{23}{50}(\ph^{-1}+\ph^{-5})\ph^t.
\]
Using $\ph^2=\ph+1$, one obtains
\[
        15-9\ph-\frac{23}{50}(\ph^{-1}+\ph^{-5})
        =
        \frac{957-588\ph}{50}>0.
\]
Hence
\[
        \ph^{-1}f_t+f_{t-5}-1
        <
        (15-9\ph)\ph^t,
\]
contradicting the lower bound for $\ph b-a$. Therefore
\[
        b-a\ge f_{t-5}.
\]

For even $t$,
\[
        8=f_{t-7}f_t-f_{t-6}f_{t-1}.
\]
Thus
\[
\begin{aligned}
        N+8
        &=a f_t+b f_{t-1}+8\\
        &=(a+f_{t-7})f_t+(b-f_{t-6})f_{t-1}.
\end{aligned}
\]
Set
\[
        a'=a+f_{t-7},
        \qquad
        b'=b-f_{t-6}.
\]
Since
\[
        b-a\ge f_{t-5}=f_{t-6}+f_{t-7},
\]
we have
\[
        b'-a'=b-a-f_{t-6}-f_{t-7}\ge0.
\]
Also $a'\ge1$ and $b'\le b\le f_t$. Hence
\[
        1\le a'\le b'\le f_t.
\]
Therefore $N+8$ has a valid Chung--Graham--Spiro representation with the same
even representation parameter $t$. Hence $N+8\in D$.
\end{proof}

\begin{lemma}[Backward $5$-shift]\label{lem:backward5}
Let $N\in D$, and write its Chung--Graham--Spiro representation as
\[
        N=a f_t+b f_{t-1},
        \qquad
        1\le a\le b\le f_t.
\]
Assume that $t$ is even, $t\ge8$, and
\[
        \dd_N\ge 9\ph-14.
\]
Then
\[
        N-5\in D.
\]
\end{lemma}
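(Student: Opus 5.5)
The plan mirrors the proof of Lemma~\ref{lem:forward8}. Since $N\in D$ with $t$ even, Lemma 4.1 of \cite{CGS} gives $\dd_N=\ph^{-t}(\ph b-a)$. The hypothesis therefore becomes
\[
        \ph b-a\ge (9\ph-14)\ph^t .
\]
Next we use the identity $5=f_{t-5}f_{t-1}-f_{t-6}f_t$, valid for even $t$, to write
\[
        N-5=(a+f_{t-6})f_t+(b-f_{t-5})f_{t-1}.
\]
Put $a'=a+f_{t-6}$ and $b'=b-f_{t-5}$. Clearly $a'\ge1$ and $b'\le b\le f_t$. So the pair is a valid representation with the same even parameter $t$ exactly when $b'\ge a'$, that is, when
\[
        b-a\ge f_{t-5}+f_{t-6}=f_{t-4}.
\]
Once this holds, uniqueness of the Chung--Graham--Spiro representation shows that $t$ is the representation parameter of $N-5$. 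Since $t$ is even, $N-5\in D$. Note that $N-5\ge2$ automatically, because $N-5=a'f_t+b'f_{t-1}$ with $a',b'\ge1$ and $t\ge8$.

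The key step is to prove $b-a\ge f_{t-4}$ by contradiction. Suppose $b-a\le f_{t-4}-1$. Since $b\le f_t$,
\[
        \ph b-a=\ph^{-1}b+(b-a)\le \ph^{-1}f_t+f_{t-4}-1 .
\]
We must show this is strictly less than $(9\ph-14)\ph^t$. Numerically, $9\ph-14\approx0.5623$. The right side of the displayed bound is roughly $(\ph^{-1}+\ph^{-4})\ph^t/\sqrt5\approx(0.618+0.146)(0.447)\ph^t\approx0.3416\,\ph^t$. So there is ample room, and the contradiction should follow.

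For the base case $t=8$ we compute exactly. Here $f_8=21$ and $f_4=3$, so the bound is $21\ph^{-1}+2=21\ph-19$. On the other side, $(9\ph-14)\ph^8$ reduces via $\ph^8=21\ph+13$ to
\[
        (9\ph-14)(21\ph+13)=189\ph^2-177\ph-182=12\ph+7 .
\]
The required inequality $21\ph-19<12\ph+7$ is equivalent to $9\ph<26$, which is true. For $t\ge10$ we use $f_s\le\frac{23}{50}\ph^s$ with $s=t$ and $s=t-4\ge6$. It then suffices to check
\[
        9\ph-14-\tfrac{23}{50}(\ph^{-1}+\ph^{-4})>0 .
\]
Using $\ph^{-1}=\ph-1$ and $\ph^{-4}=5-3\ph$, this becomes $(2\ph\cdot 23/50\ldots)$-type linear arithmetic in $\ph$, and numerically the left side is about $0.21>0$. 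Verifying this inequality cleanly in closed form is the only mildly delicate point. Beyond that the argument is routine, and the main thing to get right is the correct choice of the identity and the shifted coefficients $f_{t-6}$ and $f_{t-5}$.
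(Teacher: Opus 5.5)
Your proposal is correct and matches the paper's proof essentially step for step: the same Lemma 4.1 reformulation, the same identity $5=f_{t-5}f_{t-1}-f_{t-6}f_t$ with $a'=a+f_{t-6}$, $b'=b-f_{t-5}$, and the same contradiction argument for $b-a\ge f_{t-4}$. The separate $t=8$ case is unnecessary, since the paper applies $f_s\le\frac{23}{50}\ph^s$ directly ($t-4\ge4$ already holds). The closed form you left unfinished is $9\ph-14-\frac{23}{50}(\ph^{-1}+\ph^{-4})=\frac{248\ph-396}{25}$, which is positive because $\ph>1.6$.
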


\begin{proof}
Since $N\in D$ and $t$ is even, Lemma 4.1 of \cite{CGS} gives
\[
        \dd_N=\ph^{-t}(\ph b-a).
\]
Thus
\[
        \ph b-a\ge (9\ph-14)\ph^t.
\]
We first prove that
\[
        b-a\ge f_{t-4}.
\]
Suppose not. Since $b-a$ is an integer,
\[
        b-a\le f_{t-4}-1.
\]
Using $b\le f_t$, we obtain
\[
        \ph b-a=\ph^{-1}b+(b-a)
        \le \ph^{-1}f_t+f_{t-4}-1.
\]
Since $t\ge8$, both $t\ge4$ and $t-4\ge4$. Hence
\[
        f_t\le \frac{23}{50}\ph^t,
        \qquad
        f_{t-4}\le \frac{23}{50}\ph^{t-4}.
\]
Therefore
\[
        \ph^{-1}f_t+f_{t-4}-1
        <
        \frac{23}{50}(\ph^{-1}+\ph^{-4})\ph^t.
\]
Using $\ph^2=\ph+1$, one obtains
\[
        9\ph-14-\frac{23}{50}(\ph^{-1}+\ph^{-4})
        =
        \frac{248\ph-396}{25}>0.
\]
Thus
\[
        \ph^{-1}f_t+f_{t-4}-1
        <
        (9\ph-14)\ph^t,
\]
contradicting the lower bound for $\ph b-a$. Therefore
\[
        b-a\ge f_{t-4}.
\]

For even $t$,
\[
        5=f_{t-5}f_{t-1}-f_{t-6}f_t.
\]
Hence
\[
\begin{aligned}
        N-5
        &=a f_t+b f_{t-1}-5\\
        &=(a+f_{t-6})f_t+(b-f_{t-5})f_{t-1}.
\end{aligned}
\]
Set
\[
        a'=a+f_{t-6},
        \qquad
        b'=b-f_{t-5}.
\]
Since
\[
        b-a\ge f_{t-4}=f_{t-5}+f_{t-6},
\]
we have
\[
        b'-a'=b-a-f_{t-5}-f_{t-6}\ge0.
\]
Also $a'\ge1$ and $b'\le b\le f_t$. Hence
\[
        1\le a'\le b'\le f_t.
\]
Therefore $N-5$ has a valid Chung--Graham--Spiro representation with the same
even representation parameter $t$. Hence $N-5\in D$.
\end{proof}

\section{Proof of the counterexample}

We first dispose of a finite initial range.

\begin{lemma}\label{lem:finite}
There are no five consecutive down-integers whose first-to-fifth difference is
$9$ and such that at least one of the five has representation parameter
$t<8$.
\end{lemma}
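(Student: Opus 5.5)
We reduce the lemma to a finite computation. The key observation is that the representation parameter bounds the size of the integer. If $n=af_t+bf_{t-1}$ with $1\le a\le b\le f_t$, then
\[
        n\le f_t(f_t+f_{t-1})=f_tf_{t+1}.
\]
So every integer with representation parameter $t\le7$ satisfies
\[
        n\le f_7f_8=13\cdot21=273.
\]
Now suppose $d_k<d_{k+1}<\cdots<d_{k+4}$ are five consecutive down-integers with $d_{k+4}-d_k=9$, and one of them has parameter $t<8$. That element is at most $273$. All five lie within $9$ of it, so $d_{k+4}\le 282$ and $d_k\le 273$.

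The lemma therefore reduces to a check over an explicit range. We list the down-integers $d\le 282$ and check that no index $k$ with $d_k\le 273$ has $d_{k+4}-d_k=9$.

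To classify each $m\le 282$ as up or down, we would compute its Chung--Graham--Spiro representation and read off the parity of $t$. The enumeration is organised by $t$:
\begin{itemize}
\item For each $t\le 8$, and additionally $t=9$ where needed to cover $m$ up to $282$, enumerate the pairs $(a,b)$ with $1\le a\le b\le f_t$.
\item Record $af_t+bf_{t-1}$ for each pair.
\item By the uniqueness part of the representation theorem, every $m\ge2$ appears exactly once across all $t$.
\end{itemize}
Lemma~\ref{lem:threshold} gives a consistency check: any $m$ with $\dd_m<\eta$ must be down, and any $m$ with $\dd_m>\rho$ must be up. Both can be decided quickly from $\lfloor\ph m\rfloor$.

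A cleaner alternative is to list $D$ as a union of arithmetic-like blocks for each even $t\in\{2,4,6,8\}$. Integers with $t=8$ can exceed $273$, but they still need to be classified correctly whenever they sit inside the window.

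Once $D\cap[2,282]$ is listed, we scan the spans $d_{k+4}-d_k$. A span of $9$ for five down-integers means they lie in a window of $10$ consecutive integers containing at most $5$ down-integers in that arrangement. Using $D_1=\{1,2,3,5\}$, the four consecutive gaps must sum to $9$. The possible multisets of gaps are:
\[
        \{1,1,2,5\},\quad \{1,2,3,3\},\quad \{2,2,2,3\},\quad \{1,1,1,\ldots\}\ \text{etc.}
\]
We verify that none of these occurs in the initial range.

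The main obstacle is simply making the finite enumeration reliable and presentable in a paper without a computer listing. The plan is to present it via the fractional parts $\dd_m$. Consecutive gap patterns of $D$ are governed by the rotation by $\ph$, so a window $N,\dots,N+9$ containing five down-integers with span $9$ forces $\dd_N$ into specific subintervals. Combined with the gap lists, this leaves only a short explicit set of candidates $N\le273$, which can be checked individually against their representations. If that is too delicate, we fall back on tabulating $D\cap[2,282]$ directly and noting the absence of the pattern, since this is a routine finite verification.
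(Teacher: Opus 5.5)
Your reduction is the paper's: an element with small parameter satisfies $M\le f_tf_{t+1}$, so the block lies in an explicit initial segment and the lemma becomes a finite check. The recipe you give for carrying out that check, however, is wrong.

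The smallest integer with representation parameter $t$ is $f_t+f_{t-1}=f_{t+1}$ (take $a=b=1$). So every $t$ with $f_{t+1}\le 282$, i.e.\ every $t\le 12$, occurs in your range; for instance $89$ has $t=10$, $144$ has $t=11$, and $233$ has $t=12$. Enumerating only $t\le 9$ leaves such integers unclassified. Your ``cleaner alternative'' using only the even $t\in\{2,4,6,8\}$ silently drops the down-integers $89,123,157,178,191,212,225,233,246,259,267,280$. The list you would scan for span-$9$ blocks would therefore not be $D\cap[2,282]$. Even the paper's smaller range $[2,113]$ needs $t=10$ (for $89$); its code handles this by looping over every $t$ with $f_{t+1}\le m$. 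Once all $t\le 12$ are included, your scan does succeed: no five-term block of $D$ in $[2,282]$ has span $9$.

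There are also a few secondary issues.

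\begin{itemize}
\item You overlooked that the element with $t<8$ is itself a down-integer. Its parameter is therefore even, so $t\le 6$ and $M\le f_6f_7=104$, which confines the block to $[2,113]$, as in the paper. Your bound $273$ is valid, but it commits you to a stronger statement (no span-$9$ block with $d_k\le 273$, whatever the parameters) over well over twice as many down-integers.
\item You never actually produce the list or the spans, and that check is the entire content of the lemma.
\item Your fallback via fractional parts would not shorten the check. Lemma~\ref{lem:threshold} decides membership only when $\dd_m<\eta$ or $\dd_m>\rho$. The resulting constraints on $\dd_n$ leave exactly the ten feasible intervals of Section~\ref{sec:intervals}, whose union has length about $0.45$. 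A positive proportion of all $n\le 273$ would remain candidates, not a short list.
\item Only the gap multisets $\{1,1,2,5\}$, $\{1,2,3,3\}$ and $\{2,2,2,3\}$ sum to $9$. Your $\{1,1,1,\ldots\}$ would need a gap of $6$.
\end{itemize}
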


\begin{proof}
By five consecutive down-integers, we mean five terms
\[
        d_k,d_{k+1},d_{k+2},d_{k+3},d_{k+4}
\]
of the increasing sequence $D$.

If $M\in D$ has even representation parameter $t<8$, then
\[
        t\in\{2,4,6\}.
\]
Writing
\[
        M=a f_t+b f_{t-1},
        \qquad
        1\le a\le b\le f_t,
\]
we get
\[
        M\le f_t^2+f_t f_{t-1}=f_t f_{t+1}\le f_6f_7=104.
\]
Thus any interval $[n,n+9]$ containing such an $M$ is contained in $[2,113]$.

The finite enumeration is reproduced in the appendix. It implements the
Chung--Graham--Spiro representation directly and gives
\[
\begin{aligned}
D\cap[2,113]=\{&
2,5,7,9,10,12,13,15,18,23,26,28,31,33,34,36,38,39,41,43,\\
&44,46,47,48,49,51,52,54,56,57,59,60,62,64,65,67,68,70,\\
&72,73,75,78,80,81,83,86,88,89,91,94,96,99,102,104,107,112\}.
\end{aligned}
\]
Checking the consecutive five-term blocks in this displayed list shows that no
five consecutive elements of $D\cap[2,113]$ have first-to-fifth difference $9$.
\end{proof}

We now prove that $9\notin D_4$. Suppose, for contradiction, that $9\in D_4$.
Then there exist five consecutive down-integers
\[
        n=d_k,\quad d_{k+1},\quad d_{k+2},\quad d_{k+3},\quad d_{k+4}=n+9.
\]
Let
\[
        g_i=d_{k+i}-d_{k+i-1}\qquad(1\le i\le4).
\]
Since $D_1=\{1,2,3,5\}$,
\[
        g_i\in\{1,2,3,5\}.
\]
Since $D_2=\{2,3,4,5,6,8,10\}$,
\[
        g_i+g_{i+1}\in\{2,3,4,5,6,8,10\}
        \qquad(1\le i\le3).
\]
Finally,
\[
        g_1+g_2+g_3+g_4=9.
\]
A direct enumeration gives exactly the following twenty-two possible patterns:
\[
\begin{aligned}
&(1,2,1,5),(1,2,3,3),(1,3,2,3),(1,3,3,2),(1,5,1,2),\\
&(2,1,1,5),(2,1,3,3),(2,1,5,1),(2,2,2,3),(2,2,3,2),\\
&(2,3,1,3),(2,3,2,2),(2,3,3,1),\\
&(3,1,2,3),(3,1,3,2),(3,2,1,3),(3,2,2,2),(3,2,3,1),\\
&(3,3,1,2),(3,3,2,1),(5,1,1,2),(5,1,2,1).
\end{aligned}
\]

For a pattern $P=(g_1,g_2,g_3,g_4)$, define its \emph{position set} $S(P)$ by
\[
        S(P)=\{0,g_1,g_1+g_2,g_1+g_2+g_3,9\}.
\]
Thus $S(P)$ is the set of positions in the interval $[n,n+9]$ occupied by the
five down-integers. Put
\[
        x=\dd_n.
\]
If $j\in S(P)$, then $n+j\in D$, so Lemma~\ref{lem:threshold} gives
\[
        \{x+\alpha_j\}\le \rho.
\]
If $j\notin S(P)$, then $n+j\notin D$. Since $D$ and $U$ partition the integers
$m\ge2$, this means $n+j\in U$, and Lemma~\ref{lem:threshold} gives
\[
        \{x+\alpha_j\}\ge \eta.
\]

The exact interval computations in Section~\ref{sec:intervals} show that twelve
of the twenty-two patterns have no feasible value of $x$, namely
\[
\begin{aligned}
&(1,5,1,2),(2,1,1,5),(2,1,3,3),(2,1,5,1),(2,2,2,3),(2,2,3,2),\\
&(3,2,1,3),(3,2,2,2),(3,3,1,2),(3,3,2,1),(5,1,1,2),(5,1,2,1).
\end{aligned}
\]
For the remaining ten patterns, the same exact computation gives the feasible
intervals
\[
\begin{array}{c|c}
P & \text{feasible interval for }x=\dd_n\\
\hline
(1,2,1,5) & [\,\eta+1-\alpha_6,\ 1-\alpha_7\,]\\
(1,2,3,3) & [\,1-\alpha_9,\ 1-\alpha_4\,]\\
(1,3,2,3) & [\,1-\alpha_4,\ 1-\alpha_7\,]\\
(1,3,3,2) & [\,1-\alpha_7,\ \rho\,]\\
(2,3,1,3) & [\,0,\ \rho-\alpha_6\,]\\
(2,3,2,2) & [\,0,\ 1-\alpha_8\,]\\
(2,3,3,1) & [\,1-\alpha_8,\ 1-\alpha_3\,]\\
(3,1,2,3) & [\,\eta+1-\alpha_1,\ 1-\alpha_7\,]\\
(3,1,3,2) & [\,1-\alpha_7,\ \rho\,]\\
(3,2,3,1) & [\,1-\alpha_3,\ \rho-\alpha_9\,].
\end{array}
\]

Consider first the six feasible patterns
\[
\begin{aligned}
&(1,2,1,5),\quad (1,2,3,3),\quad (1,3,2,3),\quad (1,3,3,2),\\
&(3,1,2,3),\quad (3,1,3,2).
\end{aligned}
\]
For each of these patterns, the displayed feasible interval implies
\[
        \dd_n\ge 15-9\ph.
\]
Moreover, each of these six patterns has
\[
        n+8\in U.
\]
By Lemma~\ref{lem:finite}, the down-integer $n$ has even representation
parameter $t\ge8$. Hence Lemma~\ref{lem:forward8} gives
\[
        n+8\in D,
\]
contradicting $n+8\in U$.

It remains to consider the four feasible patterns
\[
        (2,3,1,3),\quad (2,3,2,2),\quad (2,3,3,1),\quad (3,2,3,1).
\]
Set
\[
        N=n+9.
\]
Then $N\in D$. For each of these four patterns, the displayed feasible interval
implies
\[
        x\le \rho-\alpha_9.
\]
Thus
\[
        x+\alpha_9\le \rho<1,
\]
and so
\[
        \dd_N=\dd_{n+9}=\{x+\alpha_9\}=x+\alpha_9.
\]
Since $x\ge0$, this gives
\[
        \dd_N\ge \alpha_9=9\ph-14.
\]
Each of these four patterns also has
\[
        n+4=N-5\in U.
\]
By Lemma~\ref{lem:finite}, the down-integer $N$ has even representation
parameter $t\ge8$. Hence Lemma~\ref{lem:backward5} gives
\[
        N-5=n+4\in D,
\]
contradicting $n+4\in U$.

Thus every possible pattern is impossible. Therefore
\[
        9\notin D_4.
\]
Since $9\in U_4$, we conclude that
\[
        9\in U_4\setminus D_4.
\]
This proves Theorem~\ref{thm:main}.

\section{Exact interval certificates}\label{sec:intervals}

This section records the exact interval computations used in the proof. All
computations take place in the quadratic field $\mathbb Q(\ph)$ and use only the
relation
\[
        \ph^2=\ph+1.
\]

For $0\le j\le9$, define
\[
        I_D(j)=\{x\in[0,1):\{x+\alpha_j\}\le\rho\},
\]
and
\[
        I_U(j)=\{x\in[0,1):\{x+\alpha_j\}\ge\eta\}.
\]
For a pattern $P$, the feasible set for $x=\dd_n$ is obtained by intersecting
$I_D(j)$ over all $j\in S(P)$ and $I_U(j)$ over all $j\notin S(P)$.

For $0\le\alpha<1$, the condition
\[
        \{x+\alpha\}\le\rho
\]
is equivalent to
\[
        x\in
        \bigl([0,\rho-\alpha]\cap[0,1)\bigr)
        \cup
        \bigl([1-\alpha,1+\rho-\alpha]\cap[0,1)\bigr).
\]
Similarly,
\[
        \{x+\alpha\}\ge\eta
\]
is equivalent to
\[
        x\in
        \bigl([\eta-\alpha,1-\alpha]\cap[0,1)\bigr)
        \cup
        \bigl([1+\eta-\alpha,1]\cap[0,1)\bigr).
\]
Here endpoints equal to $1$ are harmless, since throughout $x\in[0,1)$.

For each pattern $P$, the intervals below are obtained by intersecting the five
$I_D(j)$-conditions for $j\in S(P)$ and the five $I_U(j)$-conditions for
$j\notin S(P)$. Substituting $\alpha=\alpha_j$, using
\[
        \eta=\frac{1}{\ph+2},
        \qquad
        \rho=1-\eta,
\]
and reducing by $\ph^2=\ph+1$ gives the following certificates.

The next table proves that twelve patterns have empty feasible set. In each row,
the displayed interval is forced by some of the necessary conditions for the
pattern, while the final listed necessary condition is disjoint from that
interval. Hence the full feasible intersection is empty.

\[
\begin{array}{c|c|c}
P & \text{forced interval} & \text{additional necessary condition}\\
\hline
(1,5,1,2)
&
\left[\frac{28}{5}-\frac{16}{5}\ph,\ 7-4\ph\right]
&
I_D(7)=\left[0,\frac{57}{5}-\frac{34}{5}\ph\right]\cup[12-7\ph,1]
\\[2mm]

(2,1,1,5)
&
\left[\frac{43}{5}-\frac{26}{5}\ph,\ 2-\ph\right]
&
I_D(9)=\left[0,\frac{72}{5}-\frac{44}{5}\ph\right]\cup[15-9\ph,1]
\\[2mm]

(2,1,3,3)
&
\left[\frac{43}{5}-\frac{26}{5}\ph,\ 2-\ph\right]
&
I_D(9)=\left[0,\frac{72}{5}-\frac{44}{5}\ph\right]\cup[15-9\ph,1]
\\[2mm]

(2,1,5,1)
&
\left[\frac{43}{5}-\frac{26}{5}\ph,\ 2-\ph\right]
&
I_D(9)=\left[0,\frac{72}{5}-\frac{44}{5}\ph\right]\cup[15-9\ph,1]
\\[2mm]

(2,2,2,3)
&
[0,\ 5-3\ph]
&
I_U(5)=\left[\frac{43}{5}-\frac{26}{5}\ph,\ 9-5\ph\right]
\\[2mm]

(2,2,3,2)
&
[0,\ 5-3\ph]
&
I_U(5)=\left[\frac{43}{5}-\frac{26}{5}\ph,\ 9-5\ph\right]
\\[2mm]

(3,2,1,3)
&
[10-6\ph,\ 2-\ph]
&
I_D(9)=\left[0,\frac{72}{5}-\frac{44}{5}\ph\right]\cup[15-9\ph,1]
\\[2mm]

(3,2,2,2)
&
[5-3\ph,\ 10-6\ph]
&
I_U(8)=\left[0,13-8\ph\right]\cup
\left[\frac{68}{5}-\frac{41}{5}\ph,1\right]
\\[2mm]

(3,3,1,2)
&
\left[\frac{43}{5}-\frac{26}{5}\ph,\ 2-\ph\right]
&
I_D(9)=\left[0,\frac{72}{5}-\frac{44}{5}\ph\right]\cup[15-9\ph,1]
\\[2mm]

(3,3,2,1)
&
\left[\frac{43}{5}-\frac{26}{5}\ph,\ 2-\ph\right]
&
I_D(9)=\left[0,\frac{72}{5}-\frac{44}{5}\ph\right]\cup[15-9\ph,1]
\\[2mm]

(5,1,1,2)
&
\left[\frac{18}{5}-\frac{11}{5}\ph,\ 5-3\ph\right]
&
I_D(6)=\left[0,\frac{47}{5}-\frac{29}{5}\ph\right]\cup[10-6\ph,1]
\\[2mm]

(5,1,2,1)
&
\left[\frac{18}{5}-\frac{11}{5}\ph,\ 5-3\ph\right]
&
I_D(6)=\left[0,\frac{47}{5}-\frac{29}{5}\ph\right]\cup[10-6\ph,1].
\end{array}
\]

For example, in the first row the forced interval is
\[
        \left[\frac{28}{5}-\frac{16}{5}\ph,\ 7-4\ph\right].
\]
The additional condition $I_D(7)$ requires
\[
        x\le \frac{57}{5}-\frac{34}{5}\ph
        \qquad\text{or}\qquad
        x\ge 12-7\ph.
\]
But
\[
        \frac{57}{5}-\frac{34}{5}\ph
        <
        \frac{28}{5}-\frac{16}{5}\ph
        \le x
        \le
        7-4\ph
        <
        12-7\ph.
\]
Thus the intersection is empty. The other rows are verified in exactly the same
way.

For the ten nonempty cases, the same intersection procedure gives the following
exact feasible intervals:
\[
\begin{array}{c|c}
P & \text{feasible interval}\\
\hline
(1,2,1,5) & [\,\eta+1-\alpha_6,\ 1-\alpha_7\,]\\
(1,2,3,3) & [\,1-\alpha_9,\ 1-\alpha_4\,]\\
(1,3,2,3) & [\,1-\alpha_4,\ 1-\alpha_7\,]\\
(1,3,3,2) & [\,1-\alpha_7,\ \rho\,]\\
(2,3,1,3) & [\,0,\ \rho-\alpha_6\,]\\
(2,3,2,2) & [\,0,\ 1-\alpha_8\,]\\
(2,3,3,1) & [\,1-\alpha_8,\ 1-\alpha_3\,]\\
(3,1,2,3) & [\,\eta+1-\alpha_1,\ 1-\alpha_7\,]\\
(3,1,3,2) & [\,1-\alpha_7,\ \rho\,]\\
(3,2,3,1) & [\,1-\alpha_3,\ \rho-\alpha_9\,].
\end{array}
\]
These are precisely the intervals used in the proof of Theorem~\ref{thm:main}.
\section{Discussion and open questions}

Note that the case $\ell=3$ is not settled by the present argument. Our numerical computations have not revealed any discrepancy between $D_3$ and $U_3$, providing some computational evidence that

$$
D_3=U_3.
$$

It would be interesting to determine whether this equality indeed holds. More generally, for which values of $\ell$ do we have

$$
D_\ell=U_\ell?
$$

In particular, does the failure exhibited here at $\ell=4$ persist for all, or infinitely many, larger values of $\ell$?
\appendix

\section*{Appendix: finite verification}

The following Sage code reproduces the finite enumeration used in
Lemma~\ref{lem:finite}. It implements the Chung--Graham--Spiro representation
directly and checks that no five consecutive down-integers in the required
finite range have span $9$.

\begin{verbatim}
def fibs_up_to(N):
    f = [0, 1, 1]
    while f[-1] <= N:
        f.append(f[-1] + f[-2])
    return f

def cgs_representation(m):
    f = fibs_up_to(m)
    for t in range(2, len(f)):
        if f[t] + f[t-1] > m:
            break
        for a in range(1, f[t] + 1):
            for b in range(a, f[t] + 1):
                if a*f[t] + b*f[t-1] == m:
                    return (a, b, t)
    raise ValueError("No representation found")

D = []

for m in range(2, 114):
    a, b, t = cgs_representation(m)
    if t % 2 == 0:
        D.append(m)

print("D cap [2,113] =")
print(D)

bad_blocks = []
for i in range(len(D) - 4):
    block = D[i:i+5]
    if block[-1] - block[0] == 9:
        bad_blocks.append(block)

print("Five-term down-blocks with span 9:")
print(bad_blocks)
\end{verbatim}

The output is the displayed list of $D\cap[2,113]$ in Lemma~\ref{lem:finite},
followed by the empty list of five-term down-blocks with span $9$.

\medskip

\noindent\textbf{Conflict of interest.}
The author declares that there is no conflict of interest.

\medskip

\noindent\textbf{Data availability.}
No datasets were generated or analyzed during the present work.

\end{document}